\numberwithin{equation}{section}
\newtheorem{theorem}{Theorem}[section]
\newtheorem{definition}{Definition}[section]
\newtheorem{lemma}[theorem]{Lemma}
\newtheorem{proposition}[theorem]{Proposition}
\newtheorem{corollary}[theorem]{Corollary}
\begin{document}
\begin{center}
{\Large{\textbf{The Derivative Degree Sequences of Finite Simple Connected Graphs are Parking Functions}}} 
\end{center}
\vspace{0.5cm}
\large{\centerline{(Johan Kok)\footnote {\textbf {Affiliation of author(s):}\\
\noindent Johan Kok (Tshwane Metropolitan Police Department), City of Tshwane, Republic of South Africa\\
e-mail: kokkiek2@tshwane.gov.za}}
\vspace{0.5cm}
\begin{abstract}
\noindent Parking functions are well researched and interesting results are found in the listed references and more. Some introductory results stemming from application to degree sequences of simple connected graphs are provided in this paper. Amongst others, the result namely, that a derivative degree sequence,\\ \\
$d_d(G) \in \Bbb D_d(G) = \{(\lceil\frac{d(v_1)}{\ell}\rceil, \lceil\frac{d(v_2)}{\ell}\rceil, \lceil\frac{d(v_3)}{\ell}\rceil, ..., \lceil\frac{d(v_n)}{\ell}\rceil)| \ell = d(v_i), \forall i,$ with $d(v_i) \geq 2\},$ \\ \\of a simple connected graph $G$ is a parking function, is presented. We also introduce the concept of \emph{looping degree sequences} and the \emph{looping number}, $\xi(G)$. Four open problems are proposed as well.
\end{abstract}
\noindent {\footnotesize \textbf{Keywords:} Parking functions, Derivative degree sequence, Looping degree sequences, Looping number $\xi(G)$, \\ Recursive parking function.}\\ \\
\noindent {\footnotesize \textbf{AMS Classification Numbers:} 05C07, 05C38, 05C75, 05C85} 
\section{Introduction} 
Let the simple connected graph $G = (V, E)$ have vertices $V = \{v_1, v_2, v_3, ..., v_n\}.$ Allow each vertex $v_i$ to associate itself with a singular value $d(v_i) =  j\in \{0, 1, 2, 3, ..., n-1\},$ as its preferred value. Allow for $n$ parking spaces $p_1, p_2, p_3, .., p_n$ and allow the the vertices to stream in randomly under the rule that a vertex $v_i$ only occupies a \emph{parking space} if its preferred value $j$ has \emph{a vacant parking space} $p_j$ or has $p_{k,(j+1) \leq k\leq n}$ vacant, else vertex $v_i$ \emph{cliffs}. It is known that if $\alpha = (a_1, a_2, a_3, ..., a_n) \in \Bbb P^n$ and $b_1 \leq b_2 \leq ... \leq b_n$ is the increasing representation of $\alpha$ then $\alpha$ is a parking function if and only if $b_i \leq i.$ It is also known that every permutation of the entries of a parking function is a parking function. So the converse holds namely, if a sequense of integers say, $(a_1, a_2, a_3, ..., a_n)$ is not a parking function then no permutation thereof is a parking function. In addition the strict definition it will be relaxed so that the \emph{default} preferred value \emph{zero} is allowed. So we allow for cases $\alpha^* = (0, a_1, a_2, a_3, ..., a_n)$ to be considered.\\ \\
For ease of reference let us state a corollary done similarly by Stanley, R.P. in $[11]$.
\begin{corollary}
Let $\alpha = (a_1, a_2, a_3, ..., a_n) \in \Bbb P^n$ and $b_1 \leq b_2 \leq ... \leq b_n$ be the increasing representation of $\alpha$ then $\alpha$ is a parking function if and only if $b_i \leq i,$ and every permutation of the entries of a parking function is also a parking function.
\end{corollary}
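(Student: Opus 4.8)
The plan is to prove the two implications of the stated equivalence separately, and then to read off the permutation statement from the sorted-sequence criterion.

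For the forward implication I would argue by contraposition: assume the increasing rearrangement $b_1 \le b_2 \le \cdots \le b_n$ of $\alpha$ fails the inequality at some index, say $b_i \ge i+1$. Since $b$ is nondecreasing, the $n-i+1$ cars whose preferred values are $b_i, b_{i+1}, \ldots, b_n$ each have preference at least $i+1$, so each can only ever occupy one of the spaces $p_{i+1}, p_{i+2}, \ldots, p_n$. There are only $n-i$ such spaces, so by the pigeonhole principle at least one of these cars cliffs, no matter the order in which the cars stream in; hence $\alpha$ is not a parking function. This step is routine bookkeeping.

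For the converse I would in fact prove the stronger assertion that if $b_i \le i$ for every $i$ then all $n$ cars park whatever their arrival order --- this is no harder and immediately yields the permutation claim. Suppose to the contrary that some car cliffs. Since fewer than $n$ cars then park, some space stays empty; let $p_m$ be the empty space of largest index at the end of the process. Because a car never vacates a space, $p_m$ was empty throughout, so every car with preferred value at most $m$ came to rest in a space of index at most $m$; consequently each of the $n-m$ cars occupying $p_{m+1}, \ldots, p_n$ has preference at least $m+1$. The cliffed car also has preference at least $m+1$: if its preferred value were $p$, then at its arrival all of $p_p, p_{p+1}, \ldots, p_n$ were occupied, and these stay occupied, forcing $p > m$. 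Hence at least $(n-m)+1$ of the entries of $\alpha$ are $\ge m+1$. But if $n-m+1$ of the $b_i$ were $\ge m+1$, then the $n-m+1$ largest of them, namely $b_m, b_{m+1}, \ldots, b_n$, would all be $\ge m+1$, contradicting $b_m \le m$. This contradiction shows that every car parks.

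Finally, the permutation statement is immediate: the increasing rearrangement $b_1 \le \cdots \le b_n$ depends only on the multiset of entries of $\alpha$, so any permutation $\alpha'$ of $\alpha$ has the same rearrangement; if $\alpha$ is a parking function then the forward implication gives $b_i \le i$ for all $i$, and the converse --- established for every arrival order --- shows $\alpha'$ is a parking function too. The main obstacle is the converse: the real content is deciding what to count, and the device of focusing on the empty space of largest index is what cleanly splits the cars into those with small preference (all parked to its left) and those with large preference, converting the assumed failure into a count that contradicts $b_i \le i$. A more computational alternative --- feed the cars in increasing order of preference and show by induction that the first $i$ of them occupy exactly $p_1, \ldots, p_i$ --- also works, but then permutation-invariance has to be argued on its own, whereas the argument above delivers both at once.
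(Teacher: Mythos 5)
Your proof is correct, but there is nothing in the paper to compare it against: Corollary 1.1 is stated without proof, being quoted ``for ease of reference'' from Stanley's work on parking functions. Your argument is the standard one and it is complete: the forward direction by pigeonhole on the $n-i+1$ cars with preference at least $b_i \ge i+1$ competing for the $n-i$ spaces $p_{i+1},\ldots,p_n$; the converse by locating the highest-indexed empty space $p_m$, observing that it stays empty throughout (cars never vacate), so that the $n-m$ cars in $p_{m+1},\ldots,p_n$ together with the cliffed car give $n-m+1$ entries that are at least $m+1$, forcing $b_m \ge m+1$ and contradicting $b_m \le m$. Since that converse is established for an arbitrary arrival order and the sorted sequence depends only on the multiset of entries, the permutation-invariance claim follows at once, exactly as you say. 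The one point worth making explicit is the inference that a cliffed car forces an empty space: fewer than $n$ cars park in $n$ spaces, so some space is unoccupied --- you use this silently but it is immediate. In the context of this paper your write-up would in fact be a useful addition, since several later arguments (e.g.\ Theorem 2.1 and Daneel's theorem) lean on the $b_i \le i$ criterion and on permutation-invariance without the reader being shown why they hold.
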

\noindent It is obvious that if $\alpha = (a_1, a_2, a_3, ..., a_n)$ is a parking function on $n$ \emph{parking spaces} then $\alpha^* = (0, a_1, a_2, a_3, ..., a_n)$ is a parking function on $(n+1)$ \emph{parking spaces.} 
\section{Parking functions in respect of the degree sequences of simple connected graphs}
It is easy to see that if the vertices of a path $P_n, n \in \Bbb N$ are labelled from \emph{left to right} consecutively as $v_1, v_2, v_3, ..., v_n$, the degree sequence of the path $P_n,$ namely $(1, \underbrace{2, 2, ... 2,}_{(n-2)-entries} 1)$ is a parking function. Similarly, easy to see that the degree sequence of the cycle $C_n,$ namely $(\underbrace{2, 2, 2, ..., 2}_{n-entries})$ is not a parking function since exactly one vertex $v_i, i \in\{1, 2, 3, ..., n\}$ will \emph{cliff}.\\ \\
This leads to the observation that if the degree sequence of a spanning subgraph say, graph $H$ (or at least a spanning tree thereof, [3] and Theorem 2.6) of a simple connected graph $G$ is a parking graph, the degree sequence of $G$ might not be. However, the converse holds.
\begin{theorem}
If the degree sequence of a simple connected graph $G$ is a parking function then the degree sequence of any spanning subgraph $H$ of $G$ is a parking function.
\end{theorem}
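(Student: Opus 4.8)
The plan is to lean on the one structural fact that makes a spanning subgraph easy to handle: if $H$ is a spanning subgraph of $G$, then $H$ has exactly the same vertex set $V = \{v_1, v_2, \dots, v_n\}$ and is obtained from $G$ by deleting edges only, so $d_H(v_i) \le d_G(v_i)$ for every $i$. Hence the theorem reduces to a purely combinatorial statement about integer sequences: if $\alpha = (a_1, a_2, \dots, a_n)$ is a parking function (in the relaxed sense that allows entries equal to $0$) and $\alpha' = (a_1', a_2', \dots, a_n')$ satisfies $0 \le a_i' \le a_i$ for every $i$, then $\alpha'$ is also a parking function. Everything is then driven by the increasing-rearrangement test $b_i \le i$ of Corollary 1.1.

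First I would fix the increasing representations $b_1 \le b_2 \le \dots \le b_n$ of $\alpha$ and $b_1' \le b_2' \le \dots \le b_n'$ of $\alpha'$, and show the entrywise domination $b_i' \le b_i$ for each $i \in \{1, 2, \dots, n\}$. The key step is the elementary counting observation that for a fixed $i$ there are at least $i$ indices $j$ with $a_j \le b_i$, namely the indices realizing $b_1, b_2, \dots, b_i$ in the sorted order (ties handled arbitrarily). For each such $j$ we have $a_j' \le a_j \le b_i$, so $\alpha'$ has at least $i$ entries not exceeding $b_i$, which forces its $i$-th smallest entry to satisfy $b_i' \le b_i$.

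Then I would simply chain the inequalities: Corollary 1.1 applied to $\alpha = d(G)$ gives $b_i \le i$, so $b_i' \le b_i \le i$ for every $i$, and the converse direction of Corollary 1.1 shows that $\alpha' = d(H)$ is a parking function. Applying this with $\alpha$ the degree sequence of $G$ and $\alpha'$ the degree sequence of the spanning subgraph $H$ completes the proof; since permutations of a parking function are again parking functions, the choice of vertex labelling is immaterial.

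I do not expect a genuine obstacle here. The only point needing a little care is that when $H$ has isolated vertices, $d(H)$ carries one or more zero entries, so Corollary 1.1 must be invoked in the \emph{zero-allowed} regime described just before Section 2; one should note explicitly that the test $b_i' \le i$ is unaffected by the presence of zeros (indeed $b_1' = 0 \le 1$ is automatic). A secondary cosmetic point worth a sentence is the justification that "decrease each degree in place" is exactly the right model because $V(H) = V(G)$, so no re-indexing subtlety arises.
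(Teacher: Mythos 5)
Your proposal is correct and follows essentially the same route as the paper: both arguments rest on the entrywise domination $d_H(v_i)\le d_G(v_i)$ together with the increasing-rearrangement criterion $b_i\le i$ of Corollary 1.1. The one substantive difference is that the paper merely asserts (via a not-quite-accurate ``interleaving'' of the two sorted sequences) that domination survives sorting, whereas your counting argument --- at least $i$ entries of $\alpha$ are $\le b_i$, hence at least $i$ entries of $\alpha'$ are $\le b_i$, hence $b_i'\le b_i$ --- actually proves that step; your explicit remark about zero entries from isolated vertices of $H$ is likewise a point the paper leaves implicit.
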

\begin{proof}
Let a spanning subgraph of $G$ be graph $H$. Label the vertices of $G$ as $v_1, v_2, v_3, ..., v_n,$ and assume the degree sequence of $G$ is a parking function. Then from the definition of a spanning subgrah $H$ we have that the degree sequence is given by $(d_H(v_1)\leq d_G(v_1), d_H(v_2)\leq d_G(v_2), d_H(v_3)\leq d_G(v_3), ..., d_H(v_n)\leq d_G(v_n)).$ This implies that the degree sequence is merely a permutation of the increasing representation of the sequence $(d_H(v_i) \leq d_G(v_i) \leq d_H(v_{i+1}) \leq d_G(v_{i+1}), i = 1, 2, ..., n-1).$\\ \\
Since, $d_G(v_i) \leq i$ it follows that $d_H(v_i) \leq i$ and therefore a parking function.
\end{proof}
\noindent From our earlier observation that $\alpha^* = (0, a_1, a_2, a_3, ..., a_n)$ is a parking function on $(n+1)$ parking spaces if and only if $\alpha =(a_1, a_2, a_3, ..., a_n)$ is a parking function on $n$ parking spaces, it follows that if the degree sequence of a simple connected graph $G$ is a parking function on $n$ parking spaces , then the degree sequence of $G \cup mK_{1,_{m \rightarrow \infty}} $ is a parking function on $(n+m),_{m \rightarrow \infty}$ parking spaces.
\begin{proposition}
For a simple connected graph $G$ on n vertices the degree sequence $d(G)$ is a parking function if $\Delta(G) \leq \lceil\frac{n}{2} \rceil.$ 
\end{proposition}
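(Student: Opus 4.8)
The plan is to reduce everything to the increasing-rearrangement test of Corollary 1.1. Write $d(G)=(d(v_1),\dots,d(v_n))$ and let $b_1\le b_2\le\cdots\le b_n$ be its non-decreasing rearrangement; one must show $b_i\le i$ for every $i$. I would first restate this in an equivalent ``threshold'' form that is easier to attack with graph-theoretic counting: $b_i\le i$ for all $i$ holds if and only if, for every integer $t\ge 1$, at most $n-t+1$ vertices of $G$ have degree $\ge t$; equivalently, for every $s\ge 1$ at least $s$ vertices of $G$ have degree $\le s$. (One implication comes from observing that a vertex counted by $b_j\ge t$ must occupy a position $j\ge t$ once $b_j\le j$ is known; the reverse takes $t=b_i$ and compares the block $b_i,\dots,b_n$ with the set $\{j:b_j\ge b_i\}$.) With this in hand the proof splits according to the size of the threshold.

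For the large thresholds there is essentially nothing to prove: if $t>\Delta(G)$ then no vertex has degree $\ge t$, and for $i\ge\lceil\frac{n}{2}\rceil$ one has $b_i\le\Delta(G)\le\lceil\frac{n}{2}\rceil\le i$ directly from the hypothesis. So the entire content sits in the range $1\le i\le\lceil\frac{n}{2}\rceil-1$: one would need to show that for each such $i$ at least $i$ vertices have degree $\le i$. This is where connectivity of $G$ must be brought in, and the natural levers are the handshake identity $\sum_{v}d(v)=2|E|$ together with the existence of a spanning tree (so that $G$ carries a connected, hence degree-spread, skeleton). The hope would be: if fewer than $i$ vertices had degree $\le i$, then at least $n-i+1$ vertices would have degree $\ge i+1$, and weighing this against $\sum_v d(v)=2|E|$ and the ceiling bound $\Delta(G)\le\lceil\frac{n}{2}\rceil$ on every degree should yield a contradiction.

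The step I expect to be the real obstacle is precisely this small-threshold case, and the first instance $t=2$ (that is, $i=1$) is already critical: it demands that $G$ possess a vertex of degree $\le 1$, a leaf. But a connected graph with $\Delta(G)\le\lceil\frac{n}{2}\rceil$ need not have a leaf --- the cycle $C_n$ is connected, satisfies $\Delta(C_n)=2\le\lceil\frac{n}{2}\rceil$, yet is $2$-regular, so its degree sequence $(2,2,\dots,2)$ already fails $b_1\le 1$, and indeed the excerpt itself records that this sequence is not a parking function. The crude incidence count above also fails to bite at small $i$, since $(n-i+1)(i+1)\le n\lceil\frac{n}{2}\rceil$ for $i=1$ once $n\ge 4$. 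Hence the plan cannot be completed for the statement exactly as written; to salvage it I would either strengthen the hypothesis so that the missing low-degree vertices are supplied (for instance that $G$ has a vertex of degree $\le 1$, or more ambitiously that for every $i<\lceil\frac{n}{2}\rceil$ at least $i$ vertices have degree $\le i$, which is essentially the threshold criterion itself), or route through a spanning tree of $G$ and invoke Theorem 2.1 in the contrapositive once a spanning tree whose degree sequence is a parking function has been exhibited. I would flag this gap explicitly rather than conceal it.
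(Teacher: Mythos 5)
You are right to refuse to complete the proof: the proposition as stated is false, and your counterexample is exactly the right one. For $n\ge 3$ the cycle $C_n$ is simple and connected with $\Delta(C_n)=2\le\lceil\frac{n}{2}\rceil$, yet its degree sequence $(2,2,\dots,2)$ has increasing rearrangement with $b_1=2>1$, so it fails the test of Corollary 1.1 --- a fact the paper itself records two paragraphs before stating the proposition. Your threshold reformulation ($b_i\le i$ for all $i$ iff for every $s\ge 1$ at least $s$ vertices have degree $\le s$) is correct, and you have located the obstruction precisely: the hypothesis $\Delta(G)\le\lceil\frac{n}{2}\rceil$ controls only the large thresholds $i\ge\lceil\frac{n}{2}\rceil$ and says nothing about the small ones, in particular nothing forces a vertex of degree $\le 1$ to exist.

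For comparison, the paper attempts an induction on $n$: it assumes the result for connected graphs on $k$ vertices and forms $G^*=G+v_iv_{k+1}$ by attaching a new vertex of degree $\le\lceil\frac{k+1}{2}\rceil$, asserting that ``at all times one parking space $p_i$, $i\ge\lceil\frac{k+1}{2}\rceil$ is available'' so the new vertex can always park. This argument is broken in two places that your analysis implicitly exposes. First, attaching $v_{k+1}$ raises the degrees of its neighbours in $G$, so the old vertices no longer have the preferences the induction hypothesis was applied to; building $C_3$ from $K_2$ by joining $v_3$ to both existing vertices turns $(1,1)$ into $(2,2,2)$ and the whole sequence cliffs. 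Second, the claimed availability of a high-indexed parking space is never justified. So the paper's proof does not establish the proposition, and no proof can; your suggested repairs (adding a hypothesis guaranteeing enough low-degree vertices, i.e.\ essentially the threshold criterion itself, or routing through a spanning tree and Theorem 2.1) are the sensible ways to salvage a true statement. Flagging the gap rather than concealing it was the correct call.
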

\begin{proof}
Consider the graph on \emph{one} vertex $v_1.$ Hence, $d(v_1) = 0 \leq  \lceil\frac{1}{2}\rceil.$ With one \emph{parking space} available vertex $v_1$ can by default, park. So the result holds for $n=1.$ Assume it holds for any simple connected graph on $n = k$ vertices. Now consider any simple connected graph $G^* = G + {v_iv_{k+1}},$ for possibly multiple $i=1,2,3, ..., k.$ such that $d(v_{k+1}) \leq \lceil\frac{k+1}{2}\rceil.$ So now, we provide $k+1$ \emph{parking spaces.}\\ \\
If the $n$ vertices of  $G$ stream in at random as before they can occupy parking spaces as before or a permutation thereof and any one vertex may occupy the parking space $k+1$ as a result of a change in preferred value as well. At all times one parking space $p_i, i \geq \lceil\frac{k+1}{2}\rceil$ is available. Hence, vertex $v_{k+1}$ with $d(v_{k+1}) \leq \Delta (G) \leq \lceil \frac{n}{2}\rceil \leq\lceil\frac{n+1}{2}\rceil$ always has a \emph{parking space} to occupy. Through mathematical induction it follows that for all graphs for which, $\Delta(G) \leq \lceil\frac{n}{2}\rceil,$ the degree sequence $d(G)$ is a parking function.\\ \\
\end{proof}
\begin{corollary}
If the degree sequence of a simple connected graph $G$ is a parking function then $G$ has at least one pendant vertex.
\end{corollary}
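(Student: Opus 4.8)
The plan is to extract the conclusion directly from the increasing-rearrangement test for parking functions recorded in Corollary 1.1, using essentially nothing beyond its very first inequality. Write the degree sequence of $G$ as $d(G)=(d(v_1),d(v_2),\dots,d(v_n))$ and let $b_1\le b_2\le\cdots\le b_n$ be its increasing representation, so that $b_1=\delta(G)$ is the minimum degree of $G$. Since $d(G)$ is assumed to be a parking function, Corollary 1.1 yields $b_i\le i$ for every $i$; I only need the case $i=1$, namely $b_1\le 1$.

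The second step is to bring in connectedness. For $n\ge 2$ a connected graph has no isolated vertex, so $\delta(G)=b_1\ge 1$. Combined with $b_1\le 1$ this forces $b_1=1$, which says precisely that some vertex of $G$ has degree exactly $1$, i.e. $G$ has a pendant vertex, as claimed. No induction, no case analysis on the structure of $G$, and no appeal to the ``every permutation'' half of Corollary 1.1 is needed.

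The only delicate point — and the closest thing to an obstacle — is the trivial graph $K_1$, whose degree sequence $(0)$ is a (relaxed) parking function although $K_1$ has no vertex of degree $1$. So in carrying out the argument I would either state the corollary for $n\ge 2$ or record $K_1$ explicitly as the sole exception; beyond this bookkeeping there is nothing further to check, since the entire claim rests on the single inequality $b_1\le 1$ together with the absence of isolated vertices in a connected graph on at least two vertices.
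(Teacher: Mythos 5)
Your proof is correct and is essentially the paper's own argument made explicit: the paper's one-line observation that ``a vertex parked in parking space $p_1$'' is exactly your inequality $b_1\le 1$ from the increasing-rearrangement criterion, combined with connectivity forcing $\delta(G)\ge 1$. Your explicit handling of the $K_1$ exception (degree sequence $(0)$ under the paper's relaxed convention, yet no pendant vertex) is a small but genuine improvement in precision that the paper's proof glosses over.
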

\begin{proof}
Since a vertex parked in \emph{parking space} $p_1$ the result follows from the definition of connectivity and that of a parking function.\\ \\
\end{proof}
\begin{lemma}
For the Jaco Graph $J_n(1)$ we have that $\Delta(J_n(1)) \leq \lceil\frac{n}{2}\rceil.$
\end{lemma}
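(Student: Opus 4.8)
The plan is to work from the recursive structure of the Jaco graph rather than trying to estimate degrees blindly. Recall that in $J_\infty(1)$ an arc $(v_i,v_j)$ with $i<j$ is present precisely when $j\le 2i-d^-(v_i)$, so that the out-degree and in-degree of every vertex obey $d^+(v_i)+d^-(v_i)=i$. Passing to the finite graph $J_n(1)$ does not change which vertices send an arc into $v_i$ (every such tail $v_j$ has $j<i\le n$), but it truncates the out-neighbourhood of $v_i$ to $\{v_{i+1},\dots,v_n\}$. Hence
$$ d_{J_n(1)}(v_i)\;=\;d^-(v_i)+\min\{\,d^+(v_i),\;n-i\,\}\;=\;\min\{\,i,\;n-d^+(v_i)\,\},$$
using $d^-(v_i)=i-d^+(v_i)$ in the last equality. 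Thus the lemma is equivalent to the claim that $\min\{\,i,\;n-d^+(v_i)\,\}\le\lceil n/2\rceil$ for every $i\in\{1,\dots,n\}$.

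First I would dispose of the vertices with $i\le\lceil n/2\rceil$: for these the bound is immediate, since $d_{J_n(1)}(v_i)\le i\le\lceil n/2\rceil$. The remaining task is to show that for $i>\lceil n/2\rceil$ one has $n-d^+(v_i)\le\lceil n/2\rceil$, i.e. $d^+(v_i)\ge\lfloor n/2\rfloor$. Here I would invoke the known monotonicity of the Jaco out-degree sequence — the values $d^+(v_1)\le d^+(v_2)\le\cdots$ are non-decreasing and increase in steps of $0$ or $1$ — which reduces everything to the single estimate $d^+(v_{\lceil n/2\rceil+1})\ge\lfloor n/2\rfloor$. To get that, I would unfold the in-degree count $d^-(v_i)=\#\{\,j<i:\ j+d^+(v_j)\ge i\,\}$ together with the identity $d^+(v_j)=j-d^-(v_j)$, run an induction on $n$, and exploit the self-similarity of $J_\infty(1)$ (the recursion governing a tail $v_k,v_{k+1},\dots$ being a shifted copy of the original); the small values of $n$ would be checked by hand as base cases.

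The step I expect to be the real obstacle is exactly this last estimate: certifying that the out-degrees have already grown to at least $\lfloor n/2\rfloor$ by the time the index passes the midpoint. A crude bound such as $d^+(v_i)\ge 1$, or a purely term-by-term $d^+(v_i)\ge d^+(v_{i-1})$ argument, is far too weak here; one genuinely has to control the growth rate of the Jaco out-degree sequence (the Fisher/golden-ratio behaviour of $J_\infty(1)$) and then compare it against $\lfloor n/2\rfloor$. I would also have to keep careful track of the parity of $n$ throughout the ceiling–floor bookkeeping, and to keep separate the two regimes in which $v_i$'s out-neighbourhood is, or is not, truncated, because $\Delta(J_n(1))$ is governed by the crossover between them and the maximum is attained right at that crossover point.
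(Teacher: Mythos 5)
Your reduction is the right way to attack this: the identity $d_{J_n(1)}(v_i)=\min\{i,\;n-d^+(v_i)\}$ (using $d^+(v_i)+d^-(v_i)=i$ in $J_\infty(1)$ and the fact that passing to $J_n(1)$ only truncates out-neighbourhoods) is correct, and it correctly isolates what must be shown, namely $d^+(v_i)\ge\lfloor n/2\rfloor$ for every $i>\lceil n/2\rceil$. But that is exactly where the proposal stops: the one step you yourself flag as ``the real obstacle'' is never carried out, and ``run an induction on $n$ and exploit the self-similarity'' is a plan, not a proof. The paper gives you nothing to fall back on here either --- its entire proof is the observation $\lceil\frac{n}{2}\rceil\le\lceil\frac{n+1}{2}\rceil$ followed by a citation of Lemma 1.2 and Corollary 1.3 of $[10]$, so the substance of the lemma is outsourced and there is no argument in the paper to compare yours against.

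More seriously, the missing estimate is not merely hard; under the arc rule you quote (which is the one used in $[10]$) it is false, and your own degree formula exposes this. The out-degree sequence of $J_\infty(1)$ grows like $i/\varphi\approx 0.618\,i$, so at $i=\lceil n/2\rceil+1$ one has $d^+(v_i)\approx 0.31\,n$, well short of the required $\lfloor n/2\rfloor$. Concretely, in $J_{10}(1)$ the vertex $v_6$ has $d^-(v_6)=2$ and out-neighbours $v_7,v_8,v_9,v_{10}$, all of which lie in $J_{10}(1)$, so $d_{J_{10}(1)}(v_6)=6>5=\lceil 10/2\rceil$. Hence either the normalization of $J_n(1)$ you are working with differs from the one intended in $[10]$, or the lemma as stated fails for moderate $n$; in either case the route you propose cannot be completed as written. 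The first thing to repair is to pin down precisely what Lemma 1.2 and Corollary 1.3 of $[10]$ assert (and for which definition of the Jaco graph) before attempting to reprove the bound from the recursion.
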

\begin{proof}
Because $\lceil\frac{n}{2}\rceil \leq \lceil\frac{n+1}{2}\rceil$, the result follows from Lemma 1.2 and Corollary 1.3 in $[10].$
\end{proof}
\begin{corollary}
The degree sequence of a Jaco Graph $J_n(1)$ is a parking function.
\end{corollary}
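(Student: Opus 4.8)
The plan is to obtain the statement immediately by chaining the two results that directly precede it, namely Lemma 2.6 and Proposition 2.3. First I would recall from the construction of the Jaco graph (see $[10]$) that $J_n(1)$ is a finite simple connected graph on $n$ vertices; this is the only ``external'' fact needed, and it is exactly the hypothesis class to which Proposition 2.3 applies. So the first step is just to record that $J_n(1)$ meets the standing assumptions of Section 2.

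Next I would invoke Lemma 2.6, which gives $\Delta(J_n(1)) \leq \lceil \frac{n}{2}\rceil$. This is precisely the degree bound required by Proposition 2.3. Applying Proposition 2.3 with $G = J_n(1)$ then yields that the degree sequence $d(J_n(1))$ is a parking function, completing the argument. I would also note in passing (using the remark after Theorem 2.1 / the relaxed definition allowing a leading $0$) that the small cases $n = 1, 2$ are covered, since $J_1(1)$ and $J_2(1)$ have degree sequences $(0)$ and $(1,1)$, or $(0,1)$ depending on convention, all of which are parking functions on the appropriate number of spaces.

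I do not expect a genuine obstacle here: the corollary is a one-line consequence of the preceding lemma and proposition. The only point requiring a word of care is verifying that $J_n(1)$ really is simple and connected for every $n$ so that Proposition 2.3 is applicable, and that the vertex labelling used in defining $\Delta(J_n(1))$ in $[10]$ is consistent with the labelling implicit in the parking-function setup; both are routine to check from the definitions in $[10]$. Hence the write-up can be kept to two or three sentences.
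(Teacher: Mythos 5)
Your proposal is correct and matches the paper's own argument: the corollary is obtained by combining the degree bound $\Delta(J_n(1)) \leq \lceil\frac{n}{2}\rceil$ from the preceding lemma with the proposition that this bound forces the degree sequence to be a parking function. Your version is, if anything, slightly more explicit than the paper's one-line proof in checking that $J_n(1)$ satisfies the standing hypotheses and in handling the small cases.
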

\begin{proof}
Follows immediately from the definition of a Jaco Graph $J_n(1)$ in [10] and Proposition 2.2 above.
\end{proof}
\begin{theorem}
The degree sequence of any tree is a parking function.
\end{theorem}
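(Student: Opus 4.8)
The plan is to use the sorted characterization of parking functions recorded in Corollary 1.1: if the degree sequence of the tree $T$ on $n$ vertices is written in weakly increasing order as $b_1 \le b_2 \le \dots \le b_n$, then it suffices to prove $b_i \le i$ for every $i$. The two facts about trees I would feed into this are elementary: a finite tree on $n \ge 2$ vertices has a leaf, so $b_1 = 1$; and a tree has exactly $n-1$ edges, so $\sum_{v \in V(T)} d(v) = 2(n-1)$. The case $n = 1$ is the degenerate sequence $(0)$, a parking function under the relaxed convention of the Introduction. Note that Proposition 2.2 does not apply here, since the star $K_{1,n-1}$ is a tree with $\Delta = n-1$, so a different argument is needed; the point is that trees are \emph{sparse} and therefore cannot have the many high-degree vertices that would break the parking condition.

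First I would argue by contradiction: suppose $b_i \ge i+1$ for some index $i$. Because $b_1 = 1 \le 1$, necessarily $i \ge 2$; and because $b_i \le b_n = \Delta(T) \le n-1$, the chain $i+1 \le b_i \le n-1$ forces $i \le n-2$. In particular $n \ge 4$, so for $n \le 3$ no violating index exists and $d(T)$ is automatically a parking function. Now, since the sequence is weakly increasing, the $n-i+1$ vertices in positions $i, i+1, \dots, n$ all have degree at least $b_i \ge i+1$, so summing degrees over just these vertices gives
\[
2(n-1) \;=\; \sum_{v \in V(T)} d(v) \;\ge\; (n-i+1)(i+1).
\]

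The remaining step is pure arithmetic. The function $g(i) = (n-i+1)(i+1) = -i^2 + ni + (n+1)$ is concave in $i$, so over the interval $2 \le i \le n-2$ it is bounded below by the smaller of its endpoint values, and $g(2) = g(n-2) = 3(n-1)$. Hence $2(n-1) \ge g(i) \ge 3(n-1)$, which forces $n \le 1$, contradicting $n \ge 4$. Therefore $b_i \le i$ for all $i$, and $d(T)$ is a parking function. I expect this bookkeeping — pinning down the admissible range $2 \le i \le n-2$ of "bad" indices and verifying that the sparsity bound $2(n-1)$ is genuinely beaten across that whole range — to be the only place requiring a little care; the rest is immediate.

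As an alternative I would consider induction on $n$: delete a leaf $v$ with neighbour $u$, apply the inductive hypothesis to $T - v$, and note that $d(T)$ is obtained from $d(T-v)$ by appending an entry $1$ and increasing the entry of $u$ by one. Appending a $1$ manifestly preserves the parking-function property (this is essentially the $\alpha^*$ remark of the Introduction), so the delicate point is that raising one coordinate by $1$ need \emph{not} preserve it in general; one must exploit that $u$ has degree at most $(n-1)-1 = n-2$ in $T-v$, which is exactly the slack required. Since the direct counting argument above sidesteps this entirely, I would present that one first.
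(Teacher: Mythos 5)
Your argument is correct, and it takes a genuinely different route from the paper. The paper proves the theorem by induction on the number of vertices: it builds an arbitrary tree by attaching a pendant vertex $v_{k+1}$ to a tree $T$ on $k$ vertices whose degree sequence is assumed to be a parking function, and argues directly with the parking process that the new vertex, having degree $1$, always finds a space among the $k+1$ now available. Your primary argument instead verifies the sorted criterion $b_i \le i$ of Corollary 1.1 head-on, by combining the handshake identity $\sum_v d(v) = 2(n-1)$ with the observation that a violation $b_i \ge i+1$ would force the $n-i+1$ largest degrees to sum to at least $(n-i+1)(i+1) \ge 3(n-1)$ for $i$ in the admissible range $2 \le i \le n-2$, contradicting sparsity. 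This buys you something concrete: the paper's induction silently passes over the fact that attaching the new leaf also increases the degree of its neighbour $v_i$ by one, so the inductive hypothesis is being applied to a sequence that has not merely gained a $1$ but has also had one coordinate incremented — exactly the delicate point you flag in your alternative inductive sketch, and which the paper never addresses. Your counting proof sidesteps that issue entirely and is self-contained apart from Corollary 1.1; the paper's approach, if repaired along the lines you indicate (using $d_{T-v}(u) \le n-2$ as the needed slack), has the advantage of staying inside the parking-process picture the paper uses throughout, but as written it is less complete than yours.
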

\begin{proof}
Consider the tree on \emph{one} vertex $v_1.$ Hence, $d(v_1) = 0.$ With one \emph{parking space} available vertex $v_1$ can be default, occupy. So the result holds for $n=1.$ Assume it holds for any tree on $n = k$ vertices. Now consider any tree $T^* = T + {v_iv_{k+1}}, i \in \{1,2,3, ..., k\}.$ So we provide $k+1$ \emph{parking spaces}. If the $n$ vertices of  $T$ stream in at random as before they can occupy parking spaces as before or a permutation therefore. Any vertex may even occupy parking space $k+1$ as a result of change of preferred value as well. At all times one parking space $p_i, i \geq 1$ is available. Since $d(v_{k+1}) = 1$ it can occupy any vacant parking space $p_i, i \geq 1.$ Hence the result follows for any tree on $n = k+1$ vertices and therefore, through mathematical induction it follows to hold for all trees on $n \in \Bbb N$ vertices.
\end{proof}
\begin{proposition}
Consider any tree $T$ on $n \geq 4$ vertices and construct the tree $T^*$ by linking any two vertices $v_i, v_j$ of $T$ such that at least one pendant vertex remains. Then the degree sequence of $T^*$ is a parking function.
\end{proposition}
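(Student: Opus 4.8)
The plan is to combine the handshake (degree-sum) identity for $T^{*}$ with the increasing-representation criterion of Corollary 1.1. Here $T^{*}$ is obtained from the tree $T$ by inserting a single new edge $v_{i}v_{j}\notin E(T)$ (so $T^{*}$ is connected, with $n$ vertices and $n$ edges, hence unicyclic rather than literally a tree), and since $T$ has $n-1$ edges we get
\[
\sum_{v\in V(T^{*})}d_{T^{*}}(v)\;=\;2(n-1)+2\;=\;2n ,
\]
while $\Delta(T^{*})\le n-1$. Let $c_{1}\le c_{2}\le\cdots\le c_{n}$ be the increasing representation of $d(T^{*})$. By Corollary 1.1 it suffices to show $c_{k}\le k$ for every $k\in\{1,2,\dots,n\}$, and I would establish this by ruling out $c_{k}\ge k+1$ in three ranges of $k$.

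For $k\in\{n-1,n\}$ the bound $c_{k}\le n-1\le k$ is immediate from $\Delta(T^{*})\le n-1$. For $k=1$, the requirement $c_{1}\le 1$ just says $T^{*}$ has a vertex of degree at most $1$; as $T^{*}$ is connected on $n\ge 4$ vertices this is precisely the assertion that $T^{*}$ has a pendant vertex, which is the standing hypothesis. For $2\le k\le n-2$ I would argue by contradiction: if $c_{k}\ge k+1$ then $c_{k}\le c_{k+1}\le\cdots\le c_{n}$ shows that at least $n-k+1$ vertices of $T^{*}$ have degree $\ge k+1$, while the remaining at most $k-1$ vertices have degree $\ge 1$ by connectivity; summing degrees gives
\[
2n\;=\;\sum_{v\in V(T^{*})}d_{T^{*}}(v)\;\ge\;(n-k+1)(k+1)+(k-1)\;=\;n(k+1)-k(k-1),
\]
hence $k(k-1)\ge n(k-1)$, and dividing by $k-1\ge 1$ yields $k\ge n$, contradicting $k\le n-2$. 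This gives $c_{k}\le k$ for all $k$, so $d(T^{*})$ is a parking function.

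I expect the only delicate point to be the bookkeeping that isolates where each hypothesis is really used: the parking-function condition is automatic at the top ($k\ge n-1$, from $\Delta(T^{*})\le n-1$), is forced at the bottom ($k=1$) exactly by the ``pendant vertex remains'' clause, and throughout the middle range $2\le k\le n-2$ is controlled purely by the edge count $|E(T^{*})|=n$ via the handshake bound above. In particular Theorem 2.6 is not needed here; one could instead try to deduce the statement from it by tracking how the two incremented entries $d_{T}(v_{i})+1$ and $d_{T}(v_{j})+1$ move when the sequence is re-sorted, but that route is case-heavy and less transparent. The hypothesis $n\ge 4$ serves only to keep the middle range $2\le k\le n-2$ (and the construction itself) non-degenerate, and the same argument shows more generally that the degree sequence of any connected unicyclic graph with a pendant vertex is a parking function.
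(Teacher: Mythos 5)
Your proof is correct, but it takes a genuinely different route from the paper's. The paper argues constructively through the parking process itself: it isolates the unique cycle $C_m$ of $T^*$, observes that attaching one pendant vertex to $C_m$ supplies the extra parking space that rescues the single cliffing vertex, and then rebuilds the rest of $T^*$ by ``smartly linking pendants recursively,'' invoking finite induction over the trees hanging off the cycle. You instead bypass the parking simulation entirely and verify the sorted criterion $b_i\le i$ of Corollary 1.1 directly: the pendant hypothesis forces $c_1\le 1$, the simple-graph bound $\Delta(T^*)\le n-1$ handles $k\ge n-1$, and for the middle range $2\le k\le n-2$ the handshake identity $\sum d_{T^*}(v)=2n$ (since $T^*$ has exactly $n$ edges) yields a counting contradiction from $c_k\ge k+1$. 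Your computation checks out: $(n-k+1)(k+1)+(k-1)=n(k+1)-k(k-1)$, and $2n\ge n(k+1)-k(k-1)$ forces $k\ge n$. What your approach buys is rigor and transparency: the paper's recursive reattachment step is not actually justified (it is unclear why each intermediate graph's degree sequence remains a parking function as non-pendant structure is restored), whereas your argument isolates exactly which hypothesis carries each index range and needs no case analysis on how the trees are attached. It also immediately gives the correct general form of Corollary 2.8 --- the degree sequence of a connected unicyclic graph is a parking function precisely when a pendant vertex is present (the paper's unqualified Corollary 2.8 fails for $C_n$ itself, as the paper notes earlier). What the paper's approach offers in exchange is a process-level picture consistent with its looping/streaming narrative, but as written it is the weaker of the two arguments.
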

\begin{proof}
Construct any $T^*$ as defined in the proposition. Label the cycle $C$ and assume it is isomorphic to $C_m$. So $T^* - C_m$ is a forest of $\ell$ trees, $F = \cup_i^\ell T_i$. Consider any tree $T_j \in F$ which was linked to $v_i$ in $C_m$. Link one pendant $v_{k_1}$ to $v_i$. Because all degrees of all vertices in $C_m$ equals 2 only one vertex cliffs because all vertices have to skip parking $p_1$. But with the added pendant vertex an additional parking space $p_{m+1}$ is allowed so all vertices of $C_m$ \emph{park} and the pendant vertex occupies $p_1$. Hence the degree sequence of $C_m + v_{k_1}$ is a parking function.\\ \\
By linking another pendant $v_{k_2}$ to $v_i$ or $v_{k_1}$ a similar result follows because now we provide $m+2$ parking spaces. By smartly linking pendants recursively, the tree $T_j$ can be reconstructed as linked before to $v_i$. So it follows that the degree sequence of $C_m + T_j$ is a parking function. Label $C_m + T_j$ as $G^*_{m+1}$. So we proved the result for $G^*_{m+1}$.\\ \\
Assume the result holds for $G^*_{m+x}, x \leq \ell$. Also assume that $G^*_{m+x} = C_m+ (T_1 + T_2 +, ..., + T_x)$ has $(m+t)$ vertices. This means that the $(m+t)$ vertices all park in $(m+t)$ parking spaces. Consider any other tree $T_{x+1} \in F$ which was linked to $v^*_i$. By the same reasoning as before the degree sequence of $G^*_{m+x} + v^*_i$ is a parking function. So through finite mathematical induction the result follows.
\end{proof}
\begin{corollary}
The degree sequence of an unicyclic graph is a parking function.
\end{corollary}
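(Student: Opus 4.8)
The plan is to deduce the corollary directly from Proposition 2.7, using the standard structural description of unicyclic graphs. Recall that a connected graph $G$ on $n$ vertices is \emph{unicyclic} precisely when it has exactly $n$ edges, equivalently when it contains exactly one cycle, say $C$. First I would record the easy degree count: $\sum_{i=1}^n d(v_i) = 2|E(G)| = 2n$, so if every vertex of $G$ had degree at least $2$, then every degree would equal $2$, forcing $G$ to be $2$-regular and connected, i.e.\ $G \cong C_n$. In that single case the degree sequence $(2,2,\ldots,2)$ is not a parking function (its increasing representation has $b_1 = 2 > 1$, contradicting Corollary 1.1, and this also squares with Corollary 2.3, since $C_n$ has no pendant vertex). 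So the statement is to be understood for unicyclic graphs other than $C_n$; equivalently, by the count just made, for unicyclic graphs having at least one pendant vertex, and I would state this reading explicitly at the top of the proof.

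So assume $G$ is unicyclic with at least one pendant vertex. Pick any edge $e = v_iv_j$ of the unique cycle $C$ and set $T = G - e$. Since $e$ lies on a cycle, $T$ is still connected, and it has $n-1$ edges on $n$ vertices, hence is a spanning tree of $G$; moreover $e = v_iv_j \notin E(T)$. Therefore $G = T + v_iv_j$ is exactly the graph $T^*$ produced in Proposition 2.7 by linking two vertices of a tree, and the hypothesis of that proposition that ``at least one pendant vertex remains'' holds because $G$ was assumed to have one. It remains only to check the size hypothesis $n \geq 4$: a unicyclic graph contains a cycle, so $n \geq 3$, and the unique unicyclic graph on exactly $3$ vertices is $C_3$, which has no pendant vertex; hence $n \geq 4$ whenever $G$ has a pendant. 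Thus Proposition 2.7 applies to $T$ and $T^* = G$, and its conclusion is exactly that the degree sequence of $G$ is a parking function.

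The main obstacle here is not depth but honesty about scope: one must separate out $C_n$ as the genuine exception --- it is the only unicyclic graph whose degree sequence fails to be a parking function --- and then verify carefully that the reduction to Proposition 2.7 is legitimate, namely that deleting a cycle edge yields a spanning tree, that the deleted edge is a valid ``link'' in the sense of Proposition 2.7, and that the surviving-pendant hypothesis of that proposition is met. All three of these points are immediate once the degree-sum identity $\sum d(v_i) = 2n$ is in hand, so after that identity the argument is a short assembly of results already established in the paper.
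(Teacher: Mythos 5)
Your proof takes the same route as the paper --- the paper's entire proof is the single sentence ``Follows directly from Proposition 2.7'' --- but your elaboration is genuinely more careful and, in one respect, corrects the paper. The corollary as literally stated is false: $C_n$ is unicyclic, and the paper itself observes in Section 2 that the degree sequence $(2,2,\ldots,2)$ of $C_n$ is not a parking function (consistent with Corollary 2.3, since $C_n$ has no pendant vertex). Your degree-sum argument ($\sum d(v_i)=2n$, so a unicyclic graph with minimum degree $2$ is forced to be $C_n$) isolates this as the unique exception, and your verification that every other unicyclic graph is of the form $T + v_iv_j$ with $T$ a spanning tree on $n\geq 4$ vertices and a pendant surviving is exactly the bridge the paper leaves implicit: without the pendant-vertex and $n\geq 4$ checks, Proposition 2.7 simply does not apply, and $C_n$ is precisely the case where it cannot be made to apply. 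So your proposal is correct modulo the restated hypothesis, and the restatement is necessary; the paper's one-line deduction silently assumes it. (One caveat outside your control: the paper's own proof of Proposition 2.7 is informal in places, but you are entitled to cite that proposition as established.)
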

\begin{proof}
Follows directly from Proposition 2.7.
\end{proof}
\noindent Although we mainly consider simple connected graphs it is useful to note that:
\begin{lemma}
If the respective degree sequences of simple connected graphs $G_1, G_2, G_3, ..., G_s$ are all parking functions the \emph{the degree sequence} of the graph $H = G_i \cup_{\forall j\neq i}^s G_j,$  is a parking function.
\end{lemma}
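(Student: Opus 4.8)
The plan is to reduce the statement to the sorted-sequence criterion of Corollary 1.1 and then to show that this criterion is preserved when two qualifying sequences are merged. First I would note that forming the disjoint union $H = G_1 \cup G_2 \cup \cdots \cup G_s$ introduces no edges between the $G_j$, so every vertex keeps the degree it had in its own component; hence the multiset of degrees of $H$ is exactly the union of the degree sequences of $G_1, \dots, G_s$. Since a permutation of a parking function is again a parking function (Corollary 1.1), it is enough to work with increasing representations, and I will first treat the case $s = 2$ and then induct on $s$.

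For $s=2$, write the increasing representations of $d(G_1)$ and $d(G_2)$ as $a_1 \le \cdots \le a_m$ and $b_1 \le \cdots \le b_n$, so that $a_i \le i$ and $b_j \le j$ by hypothesis. Reformulate Corollary 1.1 in counting form: a sorted sequence of length $N$ satisfies the condition $b_i \le i$ for all $i$ if and only if, for every $j$, it has at least $\min(j,N)$ entries that are $\le j$. (The forward direction uses $b_j \le j \Rightarrow b_1,\dots,b_j \le j$; the reverse direction uses that having at least $i$ entries $\le i$ forces the $i$-th smallest to be $\le i$.) Let $c_1 \le \cdots \le c_{m+n}$ be the sorted merge of the two sequences. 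The number of $c$'s that are $\le j$ equals the number of $a$'s that are $\le j$ plus the number of $b$'s that are $\le j$, hence is at least $\min(j,m)+\min(j,n)$. A four-case check ($j$ at most / more than $m$, crossed with $j$ at most / more than $n$) shows $\min(j,m)+\min(j,n) \ge j$ for every $1 \le j \le m+n$, the last case using $j \le m+n$. Therefore $c_j \le j$ for all $j$, so $d(H)$ is a parking function.

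Next I would run the induction on $s$: the case $s=1$ is immediate, and for the step one writes $H = (G_1 \cup \cdots \cup G_{s-1}) \cup G_s$, applies the inductive hypothesis to the first union, and invokes the $s=2$ case. I would also add a remark that the relaxed convention allowing the default value $0$ creates no difficulty: among connected graphs only $K_1$ contributes a degree $0$, the sequence $(0)$ is a parking function under the relaxed definition, and the counting criterion above was phrased so as to remain valid in the presence of zero entries.

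I expect the only real (and mild) obstacle to be the bookkeeping in the merge step — making precise that the count of merged entries that are $\le j$ splits as the sum of the two component counts, and that ties among equal degrees do not break the inequality $c_j \le j$. Passing to the counting form of the criterion is exactly what removes this friction; carrying out the argument directly with the indices $b_i$ would amount to tracking which of $G_1, G_2$ supplies the $j$-th smallest entry, which is the same computation in less convenient clothing.
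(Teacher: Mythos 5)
Your proof is correct, but it takes a genuinely different route from the paper's. The paper argues operationally with the parking process itself: it exploits permutation-invariance to let all of $G_1$ stream in first, so that its $\nu_1$ vertices fill $p_1,\dots,p_{\nu_1}$, and then claims that each vertex of $G_2$ which would have parked in $p_j$ on a $\nu_2$-space lot now finds the shifted space $p_{j+\nu_1}$; the reduction from $s$ graphs to two is the same induction via associativity of the union that you use. Your argument instead works entirely with the sorted criterion $b_i\le i$ of Corollary 1.1, recast in counting form, and settles the two-component case by the inequality $\min(j,m)+\min(j,n)\ge j$ for $j\le m+n$. The paper's version stays closer to the cars-and-spaces picture that runs through the rest of the article, but its key step (that the $G_2$ vertices are simply ``translated'' by $\nu_1$ under the first-vacant-space rule) is left informal, whereas your counting version makes that step airtight and order-free. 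Your formulation also scales more cleanly: since the count of entries $\le j$ is additive over components, the $s$-fold statement follows in one stroke (the induction on $s$ is a convenience rather than a necessity), and your closing remark about the relaxed zero-preference convention covers the $K_1$ component case that the paper does not explicitly address.
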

\begin{proof}
Let $|V_1| = \nu_1, |V_2| = \nu_2, ..., |V_s| = \nu_s.$ Since $G_\ell \cup G_q = G_q \cup G_\ell$ we have that $H = G_i \cup_{\forall j\neq i}^s G_j \simeq G_1 \cup G_2 \cup ... \cup G_s.$ Without loss of generality consider graphs $G_1$ and $G_2$ and the degree sequence of $G_1$ stream in first. Note that $\nu_1 + \nu_2$ parking spaces are available. Because the degree sequence of $G_1$ is a parking function all the vertices of $G_1$ will park within the first $\nu_1$ parking spaces. Now $\nu_2$ parking spaces remain and numbered $(\nu_1+1), (\nu_1 +2), (\nu_1+3), ..., (\nu_1+\nu_2).$\\ \\
Since the degree sequence of $G_2$ is a parking function a vertex $v_i$ of $G_2$ initially occupied a parking space $p_j, 1 \leq p_j \leq \nu_2.$ The equivalent parking space in $H_2 = G_1 \cup G_2$ that is available is numbered, $p_j + \nu_1 \leq \nu_1 + \nu_2.$ Hence all vertices of $G_2$ will find parking space. So the result holds for $H_2 = G_1 \cup G_2.$\\ \\
Assume it holds for $H_t = G_1 \cup G_2 \cup ... \cup G_t.$ Now consider the graph $H_{t+1} = G_1 \cup G_2 \cup ... \cup G_t \cup G_{t+1}.$  Since, $H_{t+1} = G_1 \cup G_2 \cup ... \cup G_t \cup G_{t+1} = (G_1 \cup G_2 \cup ... \cup G_t) \cup G_{t+1},$ the results holds for the graph $H_{t+1}$ and it follows immediately to hold in general.
\end{proof}
\begin{definition}
For a simple connected graph $G$, define the set of derivative degree sequences,\\ \\ $\Bbb D_d(G) = \{(\lceil\frac{d(v_1)}{\ell}\rceil, \lceil\frac{d(v_2)}{\ell}\rceil, \lceil\frac{d(v_3)}{\ell}\rceil, ..., \lceil\frac{d(v_n)}{\ell}\rceil), \ell = d(v_i), \forall i,$ with $d(v_i) \geq 2\}.$
\end{definition}
\noindent The next theorem is of importance.
\begin{theorem}(Daneel's theorem)\footnote{In memory of my young friend who so untimely (11 Spetember 2013, age 25), parked his soul somewhere in space.}
A derivative degree sequence $d_d(G) \in \Bbb D_d(G)$ is a parking function.
\end{theorem}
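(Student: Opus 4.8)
The plan is to verify the increasing-representation criterion of Corollary 1.1 for the sequence $d_d(G) = (c_1, \ldots, c_n)$ obtained from a fixed admissible scaling value $\ell$ (so $\ell = d(v)$ for some vertex $v$ with $d(v) \ge 2$), where $c_j = \lceil d(v_j)/\ell \rceil$. Since parking functions are closed under permutation of entries (Corollary 1.1), I may relabel the vertices so that $d(v_1) \le d(v_2) \le \cdots \le d(v_n)$, and I fix an index $i_0$ with $d(v_{i_0}) = \ell$. Because $x \mapsto \lceil x/\ell \rceil$ is non-decreasing, $(c_1, c_2, \ldots, c_n)$ is then already the increasing representation of $d_d(G)$, so by Corollary 1.1 it suffices to prove
\[ d(v_i) \le i\,\ell \qquad \text{for } i = 1, 2, \ldots, n. \]

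I would first dispose of the ``large index'' regime. Because $G$ is simple, $d(v_i) \le \Delta(G) \le n-1$ for every $i$, so the inequality $d(v_i) \le i\ell$ holds automatically whenever $i\ell \ge n-1$, that is, for every $i \ge \lceil (n-1)/\ell \rceil$; since $\ell \ge 2$ this certainly covers all $i \ge \lceil n/2 \rceil$. The same estimate shows that the largest entry of $d_d(G)$ is at most $\lceil (n-1)/\ell \rceil \le \lceil (n-1)/2 \rceil \le \lceil n/2 \rceil$, which is precisely the hypothesis of Proposition 2.2; this is the structural fact that makes the theorem believable, and the natural temptation is to invoke Proposition 2.2 directly. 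The subtlety is that Proposition 2.2 is a statement about degree sequences of simple connected graphs, whereas $d_d(G)$ need not itself be the degree sequence of any graph, so one cannot quote it verbatim — instead one must re-run its parking simulation with the derivative values assigned to the vertices of $G$.

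The heart of the matter is therefore the ``small index'' regime $1 \le i < (n-1)/\ell$ (so $i\ell \le n-2$), where I must show that $G$ has at least $i$ vertices of degree at most $i\ell$. For $i = 1$ this is immediate, since the minimum degree is bounded above by the degree $d(v_{i_0}) = \ell$ of the chosen vertex. For the step from $i-1$ to $i$ I would mimic the inductive ``streaming'' argument of Proposition 2.2 and of Theorems 2.1 and 2.6: using the connectivity of $G$ together with the fact that the scaling value $\ell$ is actually realised as a degree — so $v_{i_0}$ is a vertex with only $\ell$ neighbours — I would exhibit a further vertex whose degree does not exceed $i\ell$, keeping the running count of admissible vertices in step with the available parking spaces, exactly as the proofs of the earlier results keep one space free at each stage.

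I expect this last step to be the genuine obstacle. Converting the global ``$\Delta(G) \le \lceil n/2 \rceil$-type'' information, plus the fact that $\ell$ occurs as a degree, into the sharp local count ``$G$ has at least $i$ vertices of degree $\le i\ell$'' is where all the work lies, and it is exactly the place where one must use more than the crude estimate $c_j \le \lceil n/2 \rceil$ — a generic length-$n$ tuple respecting that bound need not be a parking function. Should the count resist a direct proof, the fallback I would adopt is the spanning-tree route hinted at after Theorem 2.1: start from a spanning tree of $G$, for which Theorem 2.6 already handles the derivative sequence, and then insert the remaining edges one at a time in the manner of Proposition 2.7, tracking at each insertion how the affected entries of $d_d(G)$ change and re-checking the parking simulation.
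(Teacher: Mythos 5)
Your reduction to the sorted criterion $d(v_i) \le i\ell$ is correct, and your instinct that the ``small index'' count is the genuine obstacle is exactly right --- but the obstacle is insurmountable, because the claimed count ``$G$ has at least $i$ vertices of degree at most $i\ell$'' is false in general, and with it the theorem itself. Take $G$ to be $K_6$ together with one extra vertex $u$ joined to two vertices of the $K_6$; then $n=7$, $d(u)=2$, two vertices have degree $6$ and four have degree $5$. Choosing the admissible value $\ell = d(u) = 2$, the derivative sequence is $(1,3,3,3,3,3,3)$ up to order, whose increasing representation has $b_2 = 3 > 2$; by Corollary 1.1 this is not a parking function. (The same construction with $K_m$, $m \ge 6$, gives infinitely many counterexamples.) So no amount of streaming, spanning-tree surgery, or edge-by-edge insertion will close the gap you identified: the fact that $\ell$ is realised as a degree guarantees only the first sorted inequality $b_1 \le 1$, and nothing forces the existence of a second vertex of degree at most $2\ell$.

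It is worth saying that the paper's own proof founders on precisely the non sequitur you warned against. It establishes (at best) that the largest entry of the extremal derivative sequence is bounded by roughly $\lceil\frac{n-2}{2}\rceil$, and then asserts ``so in all cases $\lceil\frac{d(v_i)}{\min(d(v_i))_{d(v_i)\geq 2}}\rceil \le i$ for all $i$,'' which is exactly the unjustified jump from a bound on the maximum entry to the full chain of sorted inequalities. Your remark that ``a generic length-$n$ tuple respecting that bound need not be a parking function'' is the correct diagnosis of that jump. The honest conclusion is not that a cleverer argument is needed but that the statement requires an additional hypothesis (something guaranteeing sufficiently many low-degree vertices, as in Proposition 2.2) to become true.
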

\begin{proof}
The derivative degree sequence with \emph{largest} entries is certainly, $(\lceil\frac{d(v_i)}{min(d(v_i))_{d(v_i)\geq2}}\rceil, i= \\ \\1, 2, 3, ..., n).$ This implies that if $(\lceil\frac{d(v_i)}{min(d(v_i))_{d(v_i)\geq2}}\rceil, i=1, 2, 3, ..., n),$ can be shown to be \\ \\a parking function then all others derivative degree sequences are as well since, $\lceil\frac{d(v_i)}{\ell}\rceil \leq \\ \\ \lceil\frac{d(v_i)}{min(d(v_i))_{d(v_i)\geq2}}\rceil,\ell = d(v_i) \geq 2,\forall i.$ (See Corollary 1.1).\\ \\
For any simple connected graph we have that if $\lceil\frac{d(v_i)}{min(d(v_i))_{d(v_i)\geq2}}\rceil$ for exactly one vertex\\ \\then $\Delta(G) \leq (n-2).$ So in general $\Delta(G) \leq (n-2)$ for all simple connected graphs with\\ \\ $\lceil\frac{d(v_i)}{min(d(v_i))_{d(v_i)\geq2}}\rceil \geq 2.$\\ \\
The above implies that if $n$ is even then $(n+2)$ \emph{parking spaces} are available beyond the largest \emph{preferred value,} $\lceil\frac{n-2}{2}\rceil.$ So in all cases we have that $\lceil\frac{d(v_i)}{min(d(v_i))_{d(v_i)\geq2}}\rceil \leq i, i=1, 2, 3, ..., n,$ which implies the derivative degree sequence to be a parking function.\\ \\
It also implies that if $n$ is uneven then $(n+1)$ \emph{parking spaces} are available beyond the largest \emph{preferred value,} $\lceil\frac{n-2}{2}\rceil.$\\ \\ So in all cases we have that $\lceil\frac{d(v_i)}{min(d(v_i))_{d(v_i)\geq2}}\rceil \leq i, i=1, 2, 3, ..., n,$ which implies the derivative degree sequence to be a parking function.\\ \\
Finally then, since $(\lceil\frac{d(v_i)}{min(d(v_i))_{d(v_i)\geq2}}\rceil, i=1, 2, 3, ..., n)$ is a parking function for both \emph{n even or uneven,} the result that a derivative degree sequence $d_d(G) \in \Bbb D_d(G)$ of a simple connected graph $G$ is a parking function, follows.
\end{proof}
\subsection{Looping degree sequences of $G-\{v_1, v_2, v_3, ..., v_r\}, r \leq n-1$}
\emph{Looping degree sequences} are found by allowing the degree sequence of a simple connected graph $G$ to stream in and when the maximum set of vertices $T=\{v_t|v_t$ \emph{parks}$\}$ parks, the degree sequence reduces to the degree sequence of $G-T.$ Recursively then, when on the first stream vertex $v_r$ parks, the degree sequence reduces to the degree sequence of $G-\{v_1, v_2, v_3, ..., v_r\}, r \leq n-1.$ Now the first \emph{loop} streams with the degree sequence of $G-T.$ After all vertices find parking we refer to the set $\Bbb R_d$ of recursive degree sequences as a \emph{recursive parking function}.\\ \\
\textbf{Example:} For the complete graph $K_n, n$ even the \emph{recursive parking function} is given by the set $\Bbb R_d = \{\underbrace{(n-1, n-1, n-1, ..., n-1)}_{n-entries}, \underbrace{(n-3, n-3, n-3, ..., n-3)}_{(n-2)-entries}, ..., (1, 1)\}.$ Also see Lemma 2.12 (Case 1). 
\begin{theorem}
For any simple connected graph $G$ on $n$ vertices the recursive parking function is a finite set.
\end{theorem}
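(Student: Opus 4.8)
The plan is to show that the looping (recursive) process strictly decreases the number of vertices at every stage, so it can only run for finitely many loops.

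First I would isolate the crucial fact: \emph{on any single stream at least one vertex parks.} Suppose the graph currently being streamed has $m$ vertices, so $m$ parking spaces are provided. The first vertex $v$ to stream in meets a completely empty lot; its preferred value $j = d(v)$ satisfies $0 \le j \le m-1 < m$, so if $j \ge 1$ the space $p_j$ is vacant and $v$ parks there, while if $j = 0$ the vertex $v$ parks by default under the relaxed convention of Section~1. Hence the parked set $T$ always satisfies $|T| \ge 1$.

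Next I would iterate this observation. After the maximal parked set $T$ on a stream of an $m$-vertex graph is removed, the residual graph $G - T$ has $m - |T| < m$ vertices, and the next loop streams that degree sequence with exactly $m - |T|$ parking spaces. The same argument applies verbatim, so the vertex counts of the successive residual graphs form a strictly decreasing sequence $n = m_0 > m_1 > m_2 > \cdots$ in $\{0,1,2,\ldots\}$. Such a sequence reaches $0$ after at most $n$ steps, at which point every vertex has parked and the process halts. Therefore $\Bbb R_d$ consists of at most $n$ recursive degree sequences and is a finite set; in the degenerate case where $d(G)$ is already a parking function one has $T = V(G)$ on the first stream and $\Bbb R_d = \{d(G)\}$.

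The only delicate point — and the place I expect the write-up to need care rather than new ideas — is that deleting $T$ from a connected graph need not leave a connected graph, and a residual isolated vertex has preferred value $0$. I would dispose of this by noting that the looping procedure is defined on \emph{degree sequences}, not on connectivity, so disconnectedness of $G - T$ is harmless (indeed Lemma~2.9 already accommodates disjoint unions), and that the relaxed ``default preferred value zero'' convention guarantees an isolated residual vertex still parks, so the ``at least one vertex parks'' step survives unchanged at every loop.
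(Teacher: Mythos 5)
Your proposal is correct and follows essentially the same route as the paper: both arguments reduce to the observation that the first vertex of each stream always parks (since its degree is at most one less than the number of available spaces, with the default-zero convention handling isolated residual vertices), so the residual graph strictly shrinks and the process terminates in at most $n$ loops. Your framing via a strictly decreasing sequence of vertex counts is a slightly cleaner packaging of the paper's "extremal case where only one vertex parks per loop," but no new idea is involved.
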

\begin{proof}
It is obvious that if the degree sequence of a simple connected graph $G$ is a parking function, the result holds since such a degree sequence is a recursive degree sequence in itself.\\ \\
Furthermore, it is obvious that we only have to consider the extermal case where it is found that on the first stream and subsequent looping streams, only one vertex at a time finds parking. All other cases show an improvement in that the number of loops required decreases. It is also known that if the degree sequence of a simple connected graph $G$ is not a parking function then, neither the permutations thereof are parking functions. However, any first vertex of the degree sequence stream or of a permutation thereof, will park since $n$ parking spaces are available and $\Delta(G) \leq n-1,$ and therefore any $d(v_i) \leq n-1, \forall i.$\\ \\
Therefore, on the next round of streaming (\emph{looping}), $n-1$ parking spaces are available whilst $\Delta(G-v_1) \leq n-2.$ This means that any second vertex of the \emph{recursive degree sequence} or of a permutation thereof, can park. After $n-1$ recursions a single \emph{(last)} vertex remains say, $v_k$ with $d(v_k) = 0,$ and exactly \emph{one} parking space is left. It means that by the default preferred value convention and the parking rule, vertex $v_k$ may park at any available \emph{parking space}, $p_i, 1 \leq i \leq n$ which is not occupied. It also means that for any simple connected graph $G$ the set of recursive degree sequences or any permutation thereof (\emph{implicitly all} $(n+1)^{n-1}$ \emph{cases covered}), are a finite recursive parking function because all vertices necessary find parking.
\end{proof}
\subsection{Looping number $\xi(G)$ of a simple connected graph, $G$}
The \emph{looping number} $\xi(G)$ of a simple connected graph $G$ is the \emph{maximum} number of loops required for all vertices to park. Clearly, $\xi(G) = 0$ if and only if the degree sequence of $G$ is a parking function. 
\begin{lemma}
For a simple connected graph $G$ on n vertices we have, $0 \leq \xi(G) \leq \lfloor\frac{n-1}{2}\rfloor.$
\end{lemma}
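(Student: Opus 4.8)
The lower bound $0 \le \xi(G)$ is immediate, since $\xi(G)$ counts looping passes and, as noted just above, $\xi(G)=0$ precisely when the degree sequence of $G$ is already a parking function. All the content is in the upper bound, and the plan is to isolate one combinatorial fact — that \emph{every} streaming pass on $m\ge 2$ vertices parks at least two of them, for \emph{any} arrival order — and then feed it into an elementary counting argument.

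First I would prove the fact. Suppose a degree sequence of some graph on $m\ge 2$ vertices streams into the spaces $p_1,p_2,\dots,p_m$. Every preferred value is a vertex degree in a graph on $m$ vertices, hence lies in $\{0,1,\dots,m-1\}$. The first vertex to arrive, with preferred value $j$, sees all spaces vacant, so it occupies $p_j$ (or $p_1$ when $j=0$) and parks. The second vertex, with preferred value $j'\le m-1$, may occupy any of $p_{j'},p_{j'+1},\dots,p_m$ (any of $p_1,\dots,p_m$ when $j'=0$); this is a set of at least two spaces, and only one space is currently occupied, so at least one of them is free and the second vertex parks as well. I would stress that this argument uses nothing about connectivity, which matters because the residual graph obtained after deleting a parked set of vertices from $G$ need not be connected.

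Next comes the counting. Write pass $0$ for the original stream of $d(G)$, and for $k\ge 0$ let $r_k$ be the number of vertices that have cliffed after pass $k$, so that the degree sequence of the induced subgraph on those $r_k$ vertices (which is streamed into $r_k$ parking spaces) is what streams in pass $k+1$. By the fact above applied to pass $0$, $r_0\le n-2$; whenever $r_{k-1}\ge 3$ the fact gives $r_k\le r_{k-1}-2$; and whenever $r_{k-1}\le 2$ the fact forces every remaining vertex to park in pass $k$, that is $r_k=0$ (for $r_{k-1}=1$ the lone vertex parks trivially, for $r_{k-1}=2$ both park). Iterating gives $r_k\le n-2(k+1)$ so long as vertices remain, hence $r_k\le 2$ once $k\ge \lceil\frac{n-4}{2}\rceil =: k^{*}$, and then pass $k^{*}+1$ completes all parking. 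Therefore $\xi(G)\le k^{*}+1=\lceil\frac{n}{2}\rceil-1=\lfloor\frac{n-1}{2}\rfloor$. The remaining tiny cases $n\le 2$ are checked by hand: the degree sequences of $K_1$ and $K_2$ are parking functions, so $\xi=0$ there, consistent with $\lfloor\frac{n-1}{2}\rfloor=0$.

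I expect the only real pitfalls to be (i) phrasing the two-park fact for an \emph{arbitrary} arrival order, so that it genuinely bounds the adversarial quantity $\xi(G)$ rather than one particular run, and (ii) matching the index $k^{*}+1$ against $\lfloor\frac{n-1}{2}\rfloor$ in both parities of $n$ without an off-by-one slip. It is also worth recording that the bound is sharp: for $K_n$ with $n$ even, every pass parks exactly two vertices and the residual graphs run $K_n\to K_{n-2}\to\cdots\to K_2$, so $\xi(K_n)=\frac{n}{2}-1=\lfloor\frac{n-1}{2}\rfloor$, which also accounts for the $\Bbb R_d$ example displayed before the lemma.
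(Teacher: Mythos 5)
Your proof is correct, and it takes a genuinely different route from the paper's. The paper first asserts that $K_n$ is the extremal case (via a brief monotonicity claim that adding edges cannot decrease the looping number) and then computes $\xi(K_n)$ explicitly by tracking the loop sequence $K_n \to K_{n-2} \to \cdots \to K_2$ (or $K_1$), with exactly two vertices parking per pass. Your argument instead proves a universal fact -- every pass on $m \ge 2$ remaining vertices parks at least two of them, for any arrival order -- and feeds it into a counting recursion $r_k \le n-2(k+1)$. This buys you two things: you avoid having to justify the reduction to $K_n$ (which in the paper rests on a one-line claim about degree sequences of $G-v_i$ versus $(G+v_iv_j)-v_i$ that is not really an argument for monotonicity of $\xi$), and your bound holds uniformly over all graphs and all streaming orders rather than only for the claimed worst case. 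One small point worth making explicit: your two-park fact is stated for spaces $p_1,\dots,p_m$, but after earlier passes the vacant spaces form an arbitrary $r$-subset of the original $p_1,\dots,p_n$; the fact survives this, since a vertex with preferred value $j \le r-1$ sees at least $(n-j+1)-(n-r) = r-j+1 \ge 2$ vacant spaces among $p_j,\dots,p_n$, so the first two arrivals still both park. Your closing observation that $K_n$ ($n$ even) attains the bound matches the paper's Case 1 computation and shows the estimate is sharp.
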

\begin{proof}
If the degree sequence of a simple connected graph $G$ is a parking function, then $\xi(G) = 0.$\\ \\
So we consider the cases where the degree sequences of $G$ are not parking functions. Since, the degree sequence of $G - v_i$ equals the degree sequence of $(G + v_iv_j)_{j \neq i, v_j \in V(G)} - v_i,$ it follows that $\xi(G) \leq \xi(K_n).$ Hence $\xi(K_n)$ provides the \emph{upper bound}.\\ \\
\noindent Case 1: Let $n$ be even. It follows that in $K_n$ we have $d(v_i) = n-1, \forall i.$ Hence, on first streaming exactly two vertices can park in $p_{n-1}$ and $p_n$ whilst the $n-2$ vertices of $K_{n-2}$ loops (\emph{first loop}). Thereafter, exactly two more vertives (\emph{degree} = $(n-2) - 1)$ can park in $p_{n-3}$ and $p_{n-2}$ whilst the $n-4$ vertices of $K_{n-4}$ loops (\emph{second loop}). Recursively after exactly $\frac{n}{2} -2$ \emph{loops} only $K_2$, with exactly two parking spaces $p_1, p_2$ remain. Hence, after one more loop or, after $(\frac{n}{2} -2) + 1 = \lfloor\frac{n-1}{2}\rfloor$ \emph{loops} all vertices park.\\ \\
\noindent Case 2: Let $n$ be uneven. The proof follows similar to Case 1 except that recursively after exactly $\frac{n}{2} -2$ \emph{loops}, only $K_1$, with exactly one parking space $p_1$ remain. Hence, after one more loop or, after $(\frac{n}{2} -2) + 1 = \lfloor\frac{n-1}{2}\rfloor$ \emph{loops} all vertices park.
\end{proof}
\noindent Note that if the respective degree sequences of graphs $G_1$ and $G_2$ are parking functions the degree sequence of $G_1 + G_2$ is not necessarily a parking function. It remains an open problem for which graphs the result will hold.
\begin{corollary}
We have that $\xi (\cup_{i\in \Bbb N} P_i) = \xi (P_{\sum\limits_{i\in \Bbb N}}).$
\end{corollary}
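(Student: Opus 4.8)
The plan is to show that both quantities in the asserted identity are equal to $0$; then the identity is immediate. Recall from the paragraph preceding Lemma 2.12 that $\xi(H)=0$ if and only if the degree sequence of $H$ is a parking function. Hence the corollary reduces to verifying, separately, that the degree sequence of $P_{\sum_{i\in\Bbb N} i}$ is a parking function and that the degree sequence of $\cup_{i\in\Bbb N} P_i$ is a parking function.

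For the right-hand side I would simply observe that a path $P_m$ on $m$ vertices is a tree, so by Theorem 2.6 its degree sequence is a parking function; taking $m=\sum_{i\in\Bbb N} i$ gives $\xi(P_{\sum_{i\in\Bbb N} i})=0$. For the left-hand side, each $P_i$ is a simple connected graph which is a tree, so again by Theorem 2.6 its degree sequence is a parking function (alternatively one may quote the explicit observation at the start of Section 2 that $(1,2,2,\dots,2,1)$, as well as $(1,1)$ and the one-term sequence $(0)$, are parking functions). The paths $P_1,P_2,P_3,\dots$ are pairwise disjoint, so Lemma 2.9 applies and the degree sequence of $\cup_{i\in\Bbb N} P_i$ is itself a parking function; thus $\xi(\cup_{i\in\Bbb N} P_i)=0$. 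Combining, $\xi(\cup_{i\in\Bbb N} P_i)=0=\xi(P_{\sum_{i\in\Bbb N} i})$, as required.

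The only delicate point — and the main obstacle — is that $\Bbb N$ is infinite, so strictly speaking $\cup_{i\in\Bbb N} P_i$ is not a finite graph and $\sum_{i\in\Bbb N} i$ is not a finite number, whereas the parking process is formulated with finitely many parking spaces. I would resolve this by reading the statement over every finite initial segment $\{1,2,\dots,k\}$ of $\Bbb N$: there $\cup_{i=1}^{k} P_i$ and $P_{k(k+1)/2}$ both have $k(k+1)/2$ vertices, Lemma 2.9 applies verbatim to the finite union, and the two one-line reductions above give the equality at every stage. If one prefers the literal infinite reading, the padding remark that appending a vertex of default preferred value $0$ (equivalently, attaching further pendant copies of $K_{1,1}$) preserves the parking-function property lets the finite-stage equalities persist in the limit. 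Either way, the substantive content is no more than the two invocations of Theorem 2.6 together with the single invocation of Lemma 2.9.
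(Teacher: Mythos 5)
Your proposal is correct and follows essentially the same route as the paper: both sides of the identity are shown to equal $0$ because the relevant degree sequences are parking functions, the paper verifying this by hand for two paths (vertices of $P_n$ fill the first $n$ spaces, those of $P_m$ the next $m$, and $P_{n+m}$ parks since its two interior ``join'' vertices still fit in spaces $n$ and $n+1$) and then inducting, whereas you simply cite Theorem 2.6 and Lemma 2.9. Your explicit treatment of the finite-initial-segment reading of $\Bbb N$ is in fact more careful than the paper's bare appeal to induction.
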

\begin{proof}
Consider $P_n$ and $P_m$, $(n\geq 1) \in\Bbb N, (m\geq 1) \in \Bbb N.$ The vertices of path $P_n$ can park in the first $n$ parking spaces and so can the vertices of $P_m$ park in the following $n+1$ to $n+m$ parking spaces or vice versa. Equally so can the vertices of $P_{n+m}$ park in $(n+m)$ parking spaces because we have in $P_{n+m}$ that, $d(v_n) =2$ and $d(v_{n+1}) =2.$ Hence, these two vertices can still park in parking spaces, $n$ and $n+1.$ So the result follows for $P_n$ and $P_m$. Through induction it follows that $\xi (\cup_{i\in \Bbb N} P_i) = \xi (P_{\sum\limits_{i\in \Bbb N}}).$ 
\end{proof}
\noindent Now the next theorem can be settled.
\begin{theorem}
For the respective degree sequences of two simple connected graphs $G_1$ and $G_2$ we have that:\\ \\
(a) if both degree sequences are parking functions, the looping number $\xi(G_1 + G_2) \leq 1$,\\
(b) if at least one degree sequence is not a parking function, the looping number $\xi(G_1 + G_2) \leq \xi(G_1) + \xi(G_2) + 1.$
\end{theorem}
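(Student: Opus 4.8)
The plan is to recognise that (a) and (b) are the same inequality. Write $\rho(G):=\xi(G)+1$ for the total number of streaming rounds (the initial stream plus $\xi(G)$ loops) that the looping process needs on $G$; then $\xi(G_1+G_2)\le\xi(G_1)+\xi(G_2)+1$ is equivalent to
$\rho(G_1+G_2)\le\rho(G_1)+\rho(G_2)$, and (a) is just the special case $\rho(G_1)=\rho(G_2)=1$. Throughout put $n_i=|V(G_i)|$, $n=n_1+n_2$, and recall that in $G_1+G_2$ a vertex $v\in V(G_i)$ has degree $d_{G_i}(v)+n_{3-i}$.

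The device I would lean on is a relabelling observation. In a streaming round of $G_1+G_2$, let every vertex currently in the $G_1$-part arrive before any $G_2$-vertex, and rename the top $n_1$ spaces $p_{n_2+1},\dots,p_n$ as $q_1,\dots,q_{n_1}$. Then each such $G_1$-vertex $v$ prefers exactly $q_{d_{G_1}(v)}$, can only ever occupy a $q$-space, and it cliffs precisely when it would cliff while streaming $d(G_1)$ into $n_1$ spaces. Hence, if in every round we always stream the $G_1$-part first (streaming, inside it, the non-isolated residual vertices before the isolated ones, to cover the degree-$0$ wrinkle), the $G_1$-part of $G_1+G_2$ evolves round-for-round exactly like the stand-alone looping process of $G_1$; in particular all of $G_1$ has parked after $\rho(G_1)$ rounds, whatever $G_2$ is doing. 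The roles of $G_1$ and $G_2$ may of course be swapped.

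For part (a) I would assume $n_1\ge n_2$ and stream the larger graph $G_1$ first. Since $d(G_1)$ is a parking function, all of $G_1$ parks in the first round, filling $q_1,\dots,q_{n_1}=p_{n_2+1},\dots,p_n$. Every remaining vertex lies in $G_2$ and prefers $p_{n_1+d_{G_2}(w)}\ge p_{n_1+1}\ge p_{n_2+1}$, which is already occupied (as are all higher spaces, which lie in $\{p_{n_2+1},\dots,p_n\}$), so every $G_2$-vertex cliffs; the residual graph is exactly $G_2$, whose degree sequence is a parking function, and a second round finishes. Thus $\xi(G_1+G_2)\le1$, the degenerate cases $G_1=K_1$ or $G_2=K_1$ being checked directly. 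For part (b) I would induct on $n$, again streaming the larger part $G_1$ ($n_1\ge n_2$) first. If $d(G_1)$ happens to be a parking function the argument above gives $\rho(G_1+G_2)=1+\rho(G_2)\le\rho(G_1)+\rho(G_2)$. Otherwise the first round parks a proper subset $T_1\subsetneq V(G_1)$ — exactly the stand-alone first-round parked set of $G_1$ — so the residual $G_1$-part $G_1'=G_1-T_1$ has $\rho(G_1')=\rho(G_1)-1$; then the $G_2$-vertices stream, some set $T_2$ of them parking into the spaces $G_1$ left free, leaving $G_2'=G_2-T_2$. The residual graph is $G_1'+G_2'$, so by the induction hypothesis $\rho(G_1+G_2)=1+\rho(G_1'+G_2')\le1+\rho(G_1')+\rho(G_2')=\rho(G_1)+\rho(G_2')$, and it remains to see $\rho(G_2')\le\rho(G_2)$.

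That last step is the main obstacle. When $T_2=\emptyset$ it is immediate. When $T_2\ne\emptyset$, $G_2'$ is a proper induced subgraph of $G_2$, and $\rho$ is $\emph{not}$ monotone under passing to induced subgraphs in general (an induced triangle inside a graph with a pendant already defeats a naive monotonicity claim), so one cannot simply invoke a lemma. Instead I would exploit which $G_2$-vertices can park in that first round: a $G_2$-vertex preferring $p_{n_1+d_{G_2}(w)}$ can only land in one of the $n-n_1-d_{G_2}(w)+1$ top spaces $\ge p_{n_1+d_{G_2}(w)}$, so the surviving set $V(G_2')$, and hence the induced subgraph it spans, is tightly constrained; one then argues — using Theorem 2.1 to compare against spanning subgraphs and Lemma 2.11 to absorb the disconnected residuals that appear — that $G_2'$ cannot need more rounds than $G_2$. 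Making this inequality rigorous, rather than relying on a false "induced-subgraph monotonicity," is where the real work lies; the "$+1$" in the statement is exactly the slack bought by a single initial "mixing" round, after which the two parts are cleared essentially independently.
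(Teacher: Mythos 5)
Your reformulation via $\rho(G)=\xi(G)+1$ and the round-one relabelling of the top $n_1$ spaces are sound observations, but the proposal does not close, and the largest hole is one you name yourself: the inequality $\rho(G_2')\le\rho(G_2)$ is never proved, only flagged as ``where the real work lies,'' and since you correctly observe that $\rho$ is not monotone under passing to induced subgraphs, part (b) is left genuinely open rather than reduced to a routine verification. Two further gaps are not acknowledged. First, the paper defines $\xi(G)$ as the \emph{maximum} number of loops over all streaming orders, so showing that the particular order ``all of $G_1$, then all of $G_2$'' finishes in at most one loop does not bound $\xi(G_1+G_2)$; you must control arbitrary interleavings (the paper's Case a(iii), however informally, at least addresses a mixed stream). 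Second, the ``round-for-round'' claim that the $G_1$-part evolves exactly like the stand-alone process on $G_1$ is only valid in round one: from round two onward a surviving vertex $v\in V(G_1)$ has residual degree $d_{G_1-T_1}(v)+n_2-|T_2|$, i.e.\ its preference has dropped by the number of parked $G_2$-vertices, and the vacant spaces are no longer a contiguous top block, so the relabelling device does not persist. This also undermines the step $\rho(G_1')=\rho(G_1)-1$ (the number of rounds remaining in a process where some spaces are already occupied is not the stand-alone $\rho$ of the residual graph), and the residuals $G_1'$, $G_2'$ need not be connected, so the induction hypothesis as stated does not apply to $G_1'+G_2'$.

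For comparison, the paper's own argument for (a) runs through three streaming scenarios (each part first, and a mixed stream) and argues that after one loop the number of vacancies matches the number of cliffed vertices, while for (b) it verifies the bound on examples ($K_n+K_m$, $C_n+C_m$, the tetrahedron joined to a path) before asserting the general inequality via the claim that $K_{n+m}$ is extremal. That proof is itself far from rigorous, but it does not rest on the unproved monotonicity that your induction needs; if you want to salvage your route, the missing lemma comparing $\rho(G_2')$ with $\rho(G_2)$ is the piece that has to be supplied.
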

\begin{proof}
\noindent (a) Consider the simple connected graphs $G_1$ and $G_2$ on $n$ and $m$ vertices, respectively. Assume both degree sequences of $G_1$ and $G_2$ are parking functions. Because $G_1 + G_2 = G_2 + G_1$ we only have to consider the case $G_1 + G_2.$ Without loss of generality assume $n \leq m.$ \\ \\
Case a(1). Assume that all the vertices of graph $G_1$ stream in randomly, first. Then as before these vertices will park in the consecutively labelled parking spaces $p_{1+m}, p_{2+m}, ..., p_{n+m}.$ Now the vertices of $G_2$ stream in randomly and since the remaining parking spaces are labelled $p_1, p_2, ..., p_n, p_{(n+1)}, p_{(n+2)}, ..., p_{n+(m-n)}$ some vertices of $G_2$ can park by definition, and some (or all) may \emph{cliff} to enter loop one.  On the first loop the graph $G_1+G_2$ reduces to at most $G_2$ or a subgraph of $G_2$ and since the number vacant parking spaces equals the number of vertices $v_i \in V(G_2)$ that cliffed, those vertices can all park by definition. It implies that $\xi(G_1+ G_2) = 1.$ Since no cliffing occurs for $K_1 + K_1$ the result suggests that if both the respective degree sequences of two simple connected graphs $G_1$ and $G_2$ are parking functions, then $\xi(G_1 + G_2) \leq 1.$\\ \\
Case a(ii) Assume that all the vertices of graph $G_2$ stream in randomly, first. Then as before these vertices will park in the consecutively labelled parking spaces $p_{1+n}, p_{2+n}, ..., p_{m+n}.$  Since exactly $n$ parking spaces labelled, $p_1, p_2, ..., p_n$ remain vacant and only the vertices of $G_1$ with the reduces degree sequence of $G_1$ streams in on loop 1, all vertices can park. As before we have that for $K_1 + K_1$ no cliffing occurs so the result suggests that if both the respective degree sequences of two simple connected graphs $G_1$ and $G_2$ are parking functions, then $\xi(G_1 + G_2) \leq 1.$\\ \\
Case a(iii) Also let any vertex say, $v_k \in V(G_1)$ stream in first. As an extremal case assume vertex $v_k$ parked in the space $p_n$ initially. Now the degree of $v_k$ increased by $m$ so it can park in, at most, the parking space $p_{n+m}$ in $G_1 + G_2.$ Now as the next extremal case assume without loss of generality that any vertex say, $v_\ell \in V(G_2)$ streams in. Certainly then, since the degree of vertex $v_\ell \in V(G_2)$ has increased by $n$ it can park in, at most, the vacant parking space numbered $(m-n)$. So this is possible for all vertices of the graph $G_1 + G_2$ streaming in randomly, allowing for some vertices of both $V(G_1)$ and $V(G_2)$ to cliff. However, on streaming loop 1, all vertices park because the degree values decrease by either $n$ or $m.$ As before we have that for $K_1 + K_1$ no cliffing occurs so the result suggests that if both the respective degree sequences of two simple connected graphs $G_1$ and $G_2$ are parking functions, then $\xi(G_1 + G_2) \leq 1.$\\ \\
Since all cases have been argued and all the suggestions are assertive, the partial  result that for some graphs $\xi(G_1 + G_2) < \xi(G_1) + \xi(G_2),$ follows conclusively.\\ \\  
\noindent (b)  We know that $\xi(K_n) = 1, \forall n \in \Bbb N.$ Since $K_{n+m} = K_n + K_m$ we have that $\xi(K_{n+m}) = 1 < \xi(K_n) + \xi(K_m)$ so the inequality holds. We also know that $\xi(C_n) = 1, \forall n \in \Bbb N.$ For $C_n + C_m$ we have that $d_{C_n}(v) =2 + m,$ $ \forall v \in V(C_n)$ and $d_{C_m}(u) =2 + n,$ $ \forall u \in V(C_m).$ Also assume without loss of generality that $n \geq m.$\\ \\
Case (b)(i). Let the vertices of $C_n$ stream in first. It implies that exactly $n-1$ vertices of $C_n$ can park in \emph{parking spaces} $p_{(2+m)}, p_{(2+m) + 1},..., p_{(n+m)}$. On loop one we have the graph $K_1 + C_m$ looping. Since the graph has no pendant vertex a further loop will be required, leaving exactly $P_2$ looping into parking. Hence $\xi(C_n + C_m) = 2.$ It suggests that for some graphs equality $\xi(G_1 + G_2) = \xi(G_1) + \xi(G_2)$,  holds.\\ \\
Case (b)(ii). Let the vertices of $C_m$ stream in first. It implies that exactly $m-1$ vertices of $C_m$ can park with exactly one vertex of $C_n$ parking as well. On loop one we have the graph $K_1 + P_{n-1}$ looping. Since the graph has no pendant vertex a further loop will be required. Considering all random streaming we are left with either $K_1$ and one vacant parking space, or $K_1 \cup K_1$ and two vacant parking spaces or $P_2$ and two vacant parking spaces.  It suggests that for some graphs, equality $\xi(G_1 + G_2) = \xi(G_1) + \xi(G_2)$, holds. \\ \\
Since all cases have been argued and all the suggestions are assertive, the partial result that for some graphs $\xi(G_1+ G_2) = \xi(G_1) + \xi(G_2)$, follows conclusively.\\ \\  
\noindent (c) Consider the \emph{tetrahedron}, $G_1,$ [3] and the path $P_n$. We have that $d_{G_1}(v) = 3+n,$ $\forall v \in V(G_1)$ and $d_{P_n}(u) = 5$ (pendant vertices) or $6$. Any random streaming of vertices allows exactly $n$ vertices to park and on stream one, a \emph{tetrahedron} loops. Hence loop two is required because the tetrahedron has no pendant vertices. It follows that for some graphs, $\xi(G_1 + P_n) = 2 = (0 + 1) + 1 = \xi(G_1) + \xi(P_n) + 1.$ \\ \\
So we could show for specific cases that $\xi(G_1 + G_2) \leq \xi(G_1) + \xi(G_2) + 1.$ \\ \\
To settle the theorem we need to show that $\xi(G_1 + G_2) > \xi(G_1) + \xi(G_2) + 1,$ is false in general. In terms of the definition of $\xi(G)$ we know that $K_n, n \in \Bbb N$ is the most complex graph and we know that $G_1 + G_2$ is always a subgraph of $K_{(n+m)}$. It easily follows that $\xi(K_s) = \xi (K_{(n+m)}) = \xi(K_n + K_m) =\xi(K_n)+ \xi(K_m).$ In fact the $"+1"$ only follows if the degree sequence of only one graph is a parking function. Hence, $\xi(G_1 + G_2) > \xi(G_1) + \xi(G_2) +1$ is false in general.\\ \\
So the result of the theorem follows.
\end{proof}
\subsection{Appendix III of Bondy and Murty $[3]$}
As stated in Bondy and Murty $[3]$, there are a number of graphs which are interesting. We will present the looping number of some of those mentioned.\\ \\
\textbf{2.3.1 Frucht graph ([8], 1949)}\\ \\
For the \emph{Frucht graph,} $F_1$ on first stream, any 10 of the 12 vertices streaming at random will park and always leave two isolated (disjoint) vertices say, $v_i$ and $v_j$. Since $d(v_i) = d(v_j) = 0$ they may both occupy any of the two remaining vacant parking spaces on loop one. Hence, $\xi(F_1) = 1.$\\ \\ \\
\textbf{2.3.2 Folkman graph ([6], 1967)}\\ \\ 
Folkman proved that every edge, but not vertex-transitive regular graph, has at least twenty vertices. The Folkman graph $F_2$ has exactly twenty vertices, the best possible result. Each vertex $v_i$ has $d(v_i) = 4.$ So a set of $|T|= 17$ vertices streaming randomly on the first stream will park leaving three vertices in $F_2-T$ of which at most, only one vertex say, $v_i$ will have $d(v_i) = 2$ and the other two with degree 0. Hence, on loop one all vertices will park. So, $\xi(F_2) = 1.$\\ \\
\textbf{2.3.3 The platonic octahedron graph ([7], 1967)}\\ \\
The graph is 4-regular on six vertices. Hence any three vertices can park on the first stream leaving either $C_3$ or $P_3$ to loop. Since parking spaces $p_1, p_2$ and $p_3$ are available only two vertices can park on loop one in the case $C_3,$ loops. So, $\xi = 2.$\\ \\
We further observe that $\Bbb R_d = \underbrace{\{(4, 4, 4, 4, 4, 4), (2, 2, 2), (0)\}}_{C_3-loops}$ or $\underbrace{\{(4, 4, 4, 4, 4, 4), (1, 2, 1)\}}_{P_3-loops}.$ It shows that the \emph{looping number} is dependent on the permutations of vertex streaming per loop. \\ \\
\noindent [Open problem: For which graphs will we have that, if the respective degree sequences of graphs $G_1$ and $G_2$ are parking functions then the degree sequence of $H = G_1 + G_2$ is a parking function as well? $K_1 + K_1 = P_2$ is an example.] \\  \\
\noindent [Open problem: Consider the simple connected graphs $G_1, G_2, G_3, ..., G_n.$ What can be said about $\xi(\cup_{(1 \leq i \leq n)} G_i)?$]\\ \\
\noindent [Open problem: If for the simple connected graphs $G$ and $H_i, i =1, 2, 3, ..., k$ at least one degree sequence is not a parking function, it is expected that the looping number, $\xi(G+ H_{i, \forall i}) \leq \xi(G) + \sum\limits_{\forall i} \xi(H_i) +1.$ Is the conjecture true?]\\ \\
\noindent [Open problem: Define the first line graph of a simple connected graph $G$ the graph $G^{\ell=1}$. We know that the degree sequence of $P_n$ is a parking function. We also know that $P^{\ell \rightarrow \infty}_n \rightarrow P_1$ of which the degree function is a parking function. We also know that $C^{\ell \rightarrow \infty}_n \rightarrow C_n$ of which the degree sequence is not a parking function. \\ \\
(a) If it is true that the degree sequence of a simple connected graph $G$ is a parking function, is it consequently true that the degree sequence of the line graph $G^{\ell \rightarrow \infty}$ is a parking function as well ?\\ \\ 
(b) If it is true that the degree sequence of a simple connected graph $G$ is not a parking function, is it consequently true that the degree sequence of the line graph $G^{\ell \rightarrow \infty}$ is not a parking function as well ?]\\ \\
\noindent \textbf{\emph{Open access:\footnote {To be submitted to the \emph{Pioneer Journal of Mathematics and Mathematical Sciences.}}}} This paper is distributed under the terms of the Creative Commons Attribution License which permits any use, distribution and reproduction in any medium, provided the original author(s) and the source are credited. \\ \\ \\ \\
References (Limited) \\ \\
$[1]$ Aker, K., Can, M.B., \emph{From parking functions to Gelfand pairs}, Proceedings of the American Mathematical Society, Vol 140 (2012), no. 4, pp 1113-1124. \\
$[2]$ Armstrong, D., Garsia, A., Haglund, J., Rhoades, B., Sagan, B., \emph{Combinatorics of Tesler matrices in the theory of parking functions and diagonal harmonics,} Journal of Combinatorics, Vol 3 (2012), no.3, pp 451-494. \\
$[3]$ Bondy, J.A., Murty, U.S.R., \emph {Graph Theory with Applications,} Macmillan Press, London, (1976). \\
$[4]$ Chebikin, D., Postnikov, A., \emph{Generalised parking functions, descent numbers, and chain polytopes of ribbon posets}, Advances in Applied Mathematics, Vol 44 (2010), no. 2, pp 145-154.\\  
$[5]$ Dotsenko, V., \emph{Parking functions and vertex operators}, Selecta Mathematica, Vol 14 (2009), no. 2, pp 229-245.\\
$[6]$ Folkman, J., \emph{Regular line-symmetric graphs,} Journal of Combinatorial Theory, Vol 3 (1967), pp 215-232.\\
$[7]$ Frechet, M., Fan, K., \emph{Initiation to Combinatorial Topology,} Prindle, Weber and Schmidt, Boston, (1967).\\
$[8]$ Frucht, R., \emph{Graphs of degree three with a given abstract group,} Canadian Journal of Mathematics, Vol 1 (1949), pp 365-378.\\
$[9]$ Hopkins, S., Perkinson, D., \emph{Bigraphical arrangements}, arXiv: 1212.4398v2 [math.CO], 2012.\\
$[10]$ Kok, J., Fisher, P., Wilkens, B., Mabula, M., Mukungunugwa, V., \emph{Characteristics of Finite Jaco Graphs, $J_n(1), n \in \Bbb N$}, arXiv: 1404.0484v1 [math.CO], 2 April 2014. \\
$[11]$ Shin, H., \emph{A New Bijection Between Forests and Parking Functions}, arXiv: 0810.0427v2 [math.CO], 2010.\\
$[12]$ Stanley, R.P., \emph{Parking Functions}. Department of Mathematics, M.I.T., Cambridge, MA 02139.\\
\end{document}